\documentclass[preprint,12pt]{article} 
\usepackage{amsmath,amssymb,amsthm}
\usepackage{mathtools}
\usepackage{microtype}
\usepackage{hyperref} 
\usepackage{graphicx}
\usepackage{booktabs}
\usepackage{enumitem}
\usepackage{cleveref}
\usepackage{url}
\usepackage{accents} 
\usepackage{setspace} 
\usepackage[a4paper, left=3.5cm, right=3.5cm]{geometry}

\DeclareFontFamily{OMX}{yhex}{}
\DeclareFontShape{OMX}{yhex}{m}{n}{<->yhcmex10}{}
\DeclareSymbolFont{yhlargesymbols}{OMX}{yhex}{m}{n}
\DeclareMathAccent{\wideparen}{\mathord}{yhlargesymbols}{"F3}

\newtheorem{theorem}{Theorem}
\newtheorem{proposition}[theorem]{Proposition}
\newtheorem{lemma}[theorem]{Lemma}

\theoremstyle{definition}

\newtheorem{notation}[theorem]{Notation}

\newtheorem*{corollary*}{Corollary}

\theoremstyle{definition}
\newtheorem*{definition*}{Definition}
\newtheorem*{remark*}{Remark}
 
\begin{document}

\title{Asymmetry  of a class of Mellin transforms via bounded solutions}

\author{W. Oukil\thanks{Corresponding author. Email: \texttt{oukil.walid@gmail.com}} \\
\small Faculty of Mathematics. \\
\small University of Science and Technology Houari Boumediene. \\
\small BP 32 EL ALIA 16111 Bab Ezzouar, Algiers, Algeria.}

\date{\today}

\maketitle



\begin{abstract}
We introduce a family of parametrized non-homogeneous linear complex differential equations on $[1,\infty)$, depending on a complex parameter $s$ in the critical strip. We identify sufficient conditions on the non-homogeneous term that induce a structural asymmetry between the solutions corresponding to the parameters $s$ and $1-s$. More precisely, if both solutions with initial value $1$ are bounded on $[1,\infty)$, then necessarily $\Re(s)=\tfrac12$. The initial condition associated with the unique bounded solution corresponding to a parameter $s$ represents a zero of the Mellin transform associated with the non-homogeneous term at the point $s$.
\end{abstract} 
\begin{keywords} 
Linear differential equation, bounded solutions, asymptotic behavior, Integro-differential equation, Mellin transforms.
\end{keywords}\\
\begin{MSC}
34A30, 34E05  
\end{MSC}





\section{Introduction and Main Result} 
In this manuscript, we study the following  non-homogeneous linear complex differential equations:
\begin{equation}\label{EDOSimplie} 
  \dot{\phi} = w \, t^{-1} \, \phi +t^{-1} \,  \eta(t), \quad\phi(1)=\frac{1}{1-w},\quad \phi: [1, +\infty) \to \mathbb{C},
\end{equation}
where $w \in \mathbb{C}$  such that $\Im(w)\neq0$ is the parameter and the non-homogeneous term  $\eta:[1,+\infty)\to\mathbb{R}_+$  belongs to   $L^{\infty}([1,+\infty), \mathbb{R}_+)$.  We are interested in the initial conditions of the bounded solutions of the previous differential equation. To this end, we analyze the behavior of the transformed solution $t \mapsto (1-w) t^{-\frac{1}{2} }\phi(t)$, which leads us to consider the following differential equation:

\begin{gather}
\label{EDO}     
  \dot{x} = \big(w-\frac{1}{2}\big) \, t^{-1} x+( 1-w) \,  t^{-\frac{3}{2}}  \eta(t), \\
\notag  
t \in [1, +\infty), \quad x(1) = 1, \quad x: [1, +\infty) \to \mathbb{C}.
\end{gather}
Since $\eta \in L^{\infty}([1,+\infty), \mathbb{R}_+)$, the function
\[
t \mapsto t^{w-\frac{1}{2}} \int_1^t u^{-1-w} \eta(u) \, du, \quad t \ge 1.
\]
is absolutely continuous on $[1,+\infty)$. The differential equation \eqref{EDO} is a non-homogeneous linear differential equation. Then,  there exists a unique   continuous solution $\psi_{\eta,w}:[1,+\infty)\to\mathbb{C}$ of \eqref{EDO} such that $\psi_{\eta,w}( 1) = 1$, which is given by
\begin{equation}\label{Solutionz}
\psi_{\eta,w}( t) = t^{w-\frac{1}{2}}\Big[1+  (1-w)\int_1^t u^{-1-w} \eta(u) \, du\Big], \quad \forall t \ge 1.
\end{equation}
Let us introduce some notation.
\begin{notation}\label{NotationPsi}
For every $\eta \in L^{\infty}([1,+\infty), \mathbb{R}_+)$ and $w \in \mathbb{C}$,  we denote by $\psi_{\eta,w}$ the unique    continuous solution of the differential equation \eqref{EDO} given by equation \eqref{Solutionz}. 
\end{notation} 
\begin{notation}\label{Notationmu}
Denote by $\mathbb{C}_+$ the right half-plane defined as
\[
\mathbb{C}_+ := \Big\{ w \in \mathbb{C} : \quad \Re(w)>0\Big\}.
\]
For every $f \in L^{\infty}([1,+\infty), \mathbb{C})$, let   $\mu_f$ denote the function $\mu_f: \mathbb{C}_+\to \mathbb{C}$, defined as
\[
\mu_f(w )= -1-  (1-w)\int_1^{+\infty} u^{-1-w} f(u) \, du,\quad \forall w\in \mathbb{C}_+.
\]
\end{notation}
The function $\mu_f$ is defined for all $s \in \mathbb{C}_+$. Indeed, the integral is absolutely convergent, since $f\in L^{\infty}([1,+\infty), \mathbb{C})$ which is bounded and since for all $w \in \mathbb{C}_+$ we have $\Re(w) > 0$. 
For a suitable function $g$, the Mellin transform is defined by
\[
\mathcal{M}\{g\}(w)=\int_0^\infty u^{-1+w}g(u)\,du,
\]
Hence, for every $ f\in \in L^{\infty}([1,+\infty), \mathbb{C})$, we have the following identity
\[
\mu_f(w ):=-1-(1-w) \mathcal{M}\{\mathbf{1}_A\, f\}(-w), \quad  \forall w\in \mathbb{C}_+, 
\]
where $\mathbf{1}_A$ is the indicator function of $A:=[1,+\infty)$.

\subsection{Main result}
We  assume that the non-homogeneous term $\eta$, of the differential equation \eqref{EDOSimplie},  satisfies both  following conditions \eqref{NombreRotation} and \eqref{NombreRotationB}:
\begin{equation}
\label{NombreRotation}\tag{{H1}} \small  \exists \rho_\eta>0:\quad  \sup_{t\geq1}\big| \int_1^t\, \big( \rho_\eta-\eta(u) \big)\, du\big|<+\infty; 
\end{equation}
\begin{equation}
\label{NombreRotationB}\tag{{H2}}
\small \Phi[\eta^\sharp](1) \geq \frac{\pi}{2\sqrt{2}} \,
\sinh\left(\frac{\pi}{4}\right)\ \ \sup_{\tau>\frac{1}{2}} \sqrt{ \frac{1}{\tau} \int_0^1 \left[ \sum_{n=0}^{\infty}(-1)^n \Phi[\eta^\sharp] \left(e^{\frac{\pi}{4\tau}(n+u)}\right) \right]^2du },
\end{equation} 
where
\begin{align*}
\small\eta^\sharp(u)&:=u^{ \frac{1}{2}} \int_u^{+\infty}  v^{-\frac{3}{2} } \,   \Big(\rho_\eta-\eta(v)\Big) \, dv, \quad u\geq1,\\
\small\Phi[ \eta^\sharp](t)&:=\int_t^{+\infty} u^{-\frac{5}{2}}\, \Big(\eta^\sharp(u)-\eta^\sharp(t)\Big)\, du,\quad\forall t\geq1.
\end{align*}
We call the positive real number $\rho_\eta$ the {\it {rotation number}} of $\eta$. The introduction of the hypothesis    \eqref{NombreRotation}   allows us to control the asymptotic behavior of the solutions. It is essentially used in Lemma \ref{lemmeIntegrationExpliB} of Section \ref{MainLemmas} (the hypothesis \eqref{NombreRotationB} is used at the end of this manuscript) , where we will see that  the function  $t\mapsto t^{\frac{1}{2}}\psi_{\eta,w}(t)$ is bounded on $[1,+\infty)$  if and only if $\mu_\eta(w)=0$,  for every $w \in \mathbb{C}_+$ (for more details, see \cite{Oukil}).  We show in the following main result,  that   for every $w \in B$:  the two functions  $t\mapsto t^{\frac{1}{2}}\psi_{\eta,w}(t)$   and  $t\mapsto t^{\frac{1}{2}}\psi_{\eta,1-w}(t)$ cannot both be bounded  on $[1,+\infty)$.
\\
~~\\
Denote by $B \subset \mathbb{C}$, the  subset of  critical line, defined as
\begin{equation}\notag
B := \Big\{ w \in \mathbb{C} : \quad\Re(w)\in( 0,\frac{1}{2}),\quad  \Im(w) \geq\frac{1}{2}\Big\}.
\end{equation}

\begin{theorem}\label{MainTheo}
Let $\eta \in L^\infty([1,+\infty),\mathbb{R}_+)$ satisfy \eqref{NombreRotation} and \eqref{NombreRotationB}. Then for every $s\in B$ we have  $(\mu_\eta(s),\mu_\eta(1-{s})) \neq (0,0)$.
\end{theorem}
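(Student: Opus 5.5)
We argue by contradiction: assume some $s\in B$ with $\mu_\eta(s)=\mu_\eta(1-s)=0$. The strategy is to turn both vanishing conditions into statements about the single function $\eta^\sharp$ (equivalently $\Phi[\eta^\sharp]$), add them so that the symmetry $s\leftrightarrow 1-s$ produces a real-valued identity, and then contradict that identity with the quantitative bound \eqref{NombreRotationB}.

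\textbf{Step 1 (rewriting $\mu_\eta$).} Write $\eta=\rho_\eta-(\rho_\eta-\eta)$. For $\Re(w)>0$ we have $(1-w)\int_1^\infty u^{-1-w}\rho_\eta\,du=\rho_\eta(1-w)/w$. By \eqref{NombreRotation}, $\int_1^t(\rho_\eta-\eta)$ is bounded, so an integration by parts shows that the remaining integral $\int_1^\infty u^{-1-w}(\rho_\eta-\eta)\,du$ converges. Integrating by parts once more against $v^{-3/2}$, this integral can be written through $\eta^\sharp$, which is bounded by \eqref{NombreRotation}; this is what Lemma~\ref{lemmeIntegrationExpliB} is meant to provide. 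A third integration by parts, moving $\eta^\sharp$ onto $\Phi[\eta^\sharp]$, then expresses $\mu_\eta(w)=0$ as an identity of the form
\[
 c(w)=\int_1^\infty t^{-w-\frac12}\,\Phi[\eta^\sharp](t)\,\frac{dt}{t}\quad\text{(up to explicit rational factors in } w\text{)},
\]
where $c(w)$ is an explicit rational function of $w$ and $\rho_\eta$. The exponents are chosen so that, after substituting $t=e^{x}$, the cases $w=s$ and $w=1-s$ correspond to the Laplace-type transforms at $\pm(s-\tfrac12)$ with the same weight.

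\textbf{Step 2 (eliminating $\rho_\eta$).} Write $s=\tfrac12-\tau'+i\gamma$ with $\tau'\in(0,\tfrac12)$ and $\gamma\ge\tfrac12$. Take the two identities for $s$ and $1-s$, and form the combination that eliminates $\rho_\eta$ and $c$. Taking imaginary parts then gives a real equation
\[
 \int_0^\infty F(x)\sinh(\tau' x)\sin(\gamma x)\,dx=0,\qquad F(x)=e^{-x}\Phi[\eta^\sharp](e^{x}),
\]
possibly with an added boundary contribution proportional to $\Phi[\eta^\sharp](1)$. This is where $\Re(s)\neq\tfrac12$ is used: the factor $\sinh(\tau'x)$ is nonzero precisely because $\tau'\neq0$.

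\textbf{Step 3 (the contradiction, the main obstacle).} Split $[0,\infty)$ into half-periods of $\sin(\gamma x)$, of length $\pi/\gamma$, and rescale by $\tau=2\gamma\ge1>\tfrac12$; this produces the points $e^{\frac{\pi}{4\tau}(n+u)}$ and the alternating sum $\sum_n(-1)^n\Phi[\eta^\sharp](\cdot)$ appearing in \eqref{NombreRotationB}. The boundary term of size $\Phi[\eta^\sharp](1)$ must then balance the oscillatory integral. Bounding that integral by Cauchy--Schwarz in $u\in[0,1]$, with $\sinh$ bounded by $\sinh(\pi/4)$ on the first block and the constant $\pi/(2\sqrt2)$ coming from the $L^2$ norm of the sine, gives $\Phi[\eta^\sharp](1)\le$ (right-hand side of \eqref{NombreRotationB}). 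To contradict \eqref{NombreRotationB}, which is only a non-strict inequality, I expect to need strictness from the Cauchy--Schwarz step, since the sine is not proportional to the alternating sum. Getting the constants $\sinh(\pi/4)$ and $\pi/(2\sqrt2)$ to match exactly, including controlling the growing factor $\sinh(\tau'x)$ via the normalization chosen in Step 1, is the delicate point. If a direct match fails, I would first renormalize by $e^{-\tau' x}$ before splitting.
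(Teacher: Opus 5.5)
Your route to the key identity differs from the paper's: you integrate by parts directly in the Mellin integrals, while the paper goes through the integro-differential equation for $\delta_{\eta,s}$ and the operator $\Psi$. Both arguments, however, must finish by invoking \eqref{NombreRotationB}, and that is where yours has a genuine gap.

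\textbf{Step~3 fails.} Folding $\int_0^\infty K(x)\,\Phi[\eta^\sharp](e^x)\,dx$ into $\int_0^1(\cdots)\sum_n(-1)^n\Phi[\eta^\sharp]\big(e^{\frac{\pi}{4\tau}(n+u)}\big)\,du$ is legitimate only if the kernel $K$, weight included, is anti-periodic with period equal to the block length. Yours is not.
\begin{itemize}
\item $\sinh(\tau'x)$ grows like $e^{\tau'x}$, and the weight in $F$ also changes from block to block. So the kernel on the $n$-th block is not $\pm$ the kernel on the first, and no block-independent constant such as $\sinh(\pi/4)$ bounds it.
\item With $\tau=2\gamma$ the blocks have length $\pi/(8\gamma)$, one eighth of a half-period of $\sin(\gamma x)$. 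The sine therefore does not even alternate in sign between consecutive blocks.
\end{itemize}
Without the folding, Cauchy--Schwarz controls the integral only through an exponentially weighted $L^2$ norm of $\Phi[\eta^\sharp](e^x)$ on the whole half-line, about which \eqref{NombreRotationB} says nothing. Renormalizing by $e^{-\tau'x}$ merely moves the exponential onto $\Phi[\eta^\sharp]$, so the alternating sum in \eqref{NombreRotationB} never appears.

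The paper does not supply the missing idea. It performs the same folding, writing $I(z)=\int_0^{\pi/(4\Im s)}\Re\big(\sinh(zt)\big)\sum_n(-1)^n\Psi[\gamma_\eta^\sharp]\big(e^{t+n\pi/(4\Im s)}\big)\,dt$. This presupposes $\Re\sinh\big(z(t+\tfrac{n\pi}{4\Im s})\big)=(-1)^n\Re\sinh(zt)$. Since $\Re\sinh(zt)=\sinh\big((\Re s-\tfrac12)t\big)\cos(t\,\Im s)$, this already fails for $n=1$ at $t=0$.

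Note also that $\mu_{\eta_*}(w)=\frac{1-w}{w}\zeta(w)$ for $\eta_*(t)=\{t\}$. A complete argument, combined with the paper's numerical check of \eqref{NombreRotationB} for $\{t\}$, would therefore exclude zeros of $\zeta$ in $B$. This is not a gap that adjusting constants will close.

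\textbf{Step~2 is not what your integrations by parts produce.} Write $\mu_\eta(w)=0$ as $w\int_1^\infty u^{-1-w}(\rho_\eta-\eta(u))\,du=\rho_\eta+\frac{w}{1-w}$. Carry out your integrations by parts and subtract the cases $w=s$ and $w=1-s$; this removes $\rho_\eta$. With $z=s-\frac12$, one obtains
\[
\Phi[\eta^\sharp](1)+\int_0^\infty e^{x}\big(\cosh(zx)+z\sinh(zx)\big)\,\Phi[\eta^\sharp](e^{x})\,dx=-\frac{2}{3\,s(1-s)} .
\]
This differs from your Step~2 in three ways.
\begin{itemize}
\item The weight is $e^{x}$, not $e^{-x}$.
\item The kernel has $\cos(\gamma x)$ components. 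One checks that no real combination of the two conditions from which $\rho_\eta$ drops out produces a kernel proportional to $e^{x}\sinh(\tau'x)\sin(\gamma x)$ unless $\tau'=0$.
\item The right-hand side is not zero.
\end{itemize}
That nonzero constant is in fact what supplies strictness. Since $\Re\big(s(1-s)\big)=\Re(s)\big(1-\Re(s)\big)+\Im(s)^2>0$, the real part gives $\Phi[\eta^\sharp](1)+\Re\int_0^\infty(\cdots)\,dx<0$, the analogue of the paper's \eqref{enfin}. Eliminating the constant and hoping for strict Cauchy--Schwarz discards exactly this information.

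Even from this identity, though, a contradiction with \eqref{NombreRotationB} requires $\big|\Re\int_0^\infty(\cdots)\,dx\big|\le\Phi[\eta^\sharp](1)$. That is precisely the Step~3 estimate, which neither your argument nor the paper's establishes.
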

The pointwise version of the previous theorem weakens condition \eqref{NombreRotationB} to a pointwise condition at $s\in B$, as
\begin{equation}\notag
\small  \Phi[  \eta^\sharp](1)+\frac{ 1}{\Re(z^{-1} )}\int_1^\infty\, t^{-1}\, \Re\Big( \sinh\big(z \, \ln(t)\big)\Big)\,  \Phi[\eta^\sharp](t)\, dt\geq0,\quad z:=s-\frac{1}{2}.
\end{equation} 
\subsection{Motivation}
The fractional part function $\eta_*(t) = \{t\}$ is bounded and locally integrable. Since the function $p(t) = \int_1^t (1/2 - \{u\})\,du$ is $1$-periodic and vanishes at integer values, it is bounded for all $t \ge 1$.  Consequently, the function $\eta_*(t) = \{t\}$ satisfies hypothesis \eqref{NombreRotation} with $\rho = \frac{1}{2}$.   More precisely, every nonzero periodic function in $L^\infty([1,+\infty),\mathbb{R}_+)$  satisfies  the hypothesis \eqref{NombreRotation}.  A numerical evaluation gives
\[
\Phi[\eta_*^\sharp](1) \simeq 0.16246,
\]
whereas
\begin{align*}
\small Q(\tau):=\sup_{\tau>\frac{1}{2}} \sqrt{ \frac{1}{\tau} \int_0^1 \left[ \sum_{n=0}^{\infty}(-1)^n \Phi[\eta_*^\sharp] \left(e^{\frac{\pi}{4\tau}(n+u)}\right) \right]^2du }\le 0.09617,
\end{align*}
with 
\[
Q(\tau)\;\sim\;\frac{0.0242317\ldots}{\sqrt{\tau}}
\quad\text{quand }\tau\to+\infty,
\]
and 
\[
\frac{\pi}{2\sqrt{2}} \,\sinh\left(\frac{\pi}{4}\right) \approx 0.96485.
\]
Hence hypothesis  \eqref{NombreRotationB}  is satisfied. From the integral representation of the Riemann zeta function (Titchmarsh, \cite{Titchmarsh}, page 14, Equation 2.1.5):
\begin{equation}\label{Zetafunction}
\forall w \in \mathbb{C}_+ \setminus \{1\}: \quad \frac{1-w}{w}\zeta(w) =  \mu_{\eta_*}(w).
\end{equation}

\section{Main proposition}
In this section,  we do not use the hypotheses \eqref{NombreRotation} and \eqref{NombreRotationB}.  Let us introduce the following notation.
\begin{notation}\label{Notationdelta}
Let $\eta \in L^{\infty}([1,+\infty), \mathbb{R}_+)$. Consider the continuous solutions of the differential equation \eqref{EDO} as given in Notation~\ref{NotationPsi}. For every $w \in \mathbb{C}\setminus\{\frac{1}{2} \}$, we denote by $\delta_{\eta,w}:[1,+\infty)\to\mathbb{C}$ the function
\[
\delta_{\eta,w}(t) := \frac{1}{w-\frac{1}{2}} \Big( \psi_{\eta,w}( t) - \psi_{\eta,1-{w}}( t) \Big), \quad \forall t \ge 1.
\] 
\end{notation}
The differential equation \eqref{EDO} can be viewed as a differential equation with the parameter $\Re(w)$. Intuitively, this introduces an order structure  within the set of solutions with a same initial condition and  parameterized by $\Re(w)$. We start to exhibit this order between solutions through the following proposition.
\begin{proposition}\label{MainProp}
Let $\eta \in L^{\infty}([1,+\infty), \mathbb{R}_+)$.  For every $ s\in \mathbb{C}\setminus\{\frac{1}{2}\}$, the  function $\delta_{\eta,s}$, defined in Notation~\eqref{Notationdelta},  satisfy the following Integro-differential equation:
\begin{equation}\notag
    \dot{\delta}_{\eta,s}(t)=   \big( s-\frac{1}{2}\big) ^2 t^{-1} \int_1^t v^{-1}\delta_{\eta,s}(v)\, dv   +       t^{-1}\Big[  \gamma_\eta(t)-   2t^{ -\frac{1}{2} } \,  \eta(t)\Big], \quad {\delta}_{\eta,s}(1)=0,
\end{equation}
where  
\[
\gamma_\eta(t) :=2+  \int_1^t  v^{-\frac{3}{2} } \,  \eta(v)  \, dv,\quad \forall t\geq1.
\]  
\end{proposition}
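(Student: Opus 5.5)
The plan is to work with the sum and difference of the two solutions $\psi_{\eta,s}$ and $\psi_{\eta,1-s}$ rather than with \eqref{Solutionz} directly. Write $z:=s-\frac12\neq 0$. By Notation~\ref{NotationPsi}, $\psi_{\eta,s}$ solves \eqref{EDO} with parameter $w=s$:
\[
\dot\psi_{\eta,s}=z\,t^{-1}\psi_{\eta,s}+(1-s)\,t^{-\frac32}\eta(t).
\]
Since $(1-s)-\frac12=-z$ and $1-(1-s)=s$, the function $\psi_{\eta,1-s}$ solves
\[
\dot\psi_{\eta,1-s}=-z\,t^{-1}\psi_{\eta,1-s}+s\,t^{-\frac32}\eta(t).
\]
Both functions are absolutely continuous, so these identities hold almost everywhere. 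Both solutions equal $1$ at $t=1$.

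Introduce $D:=\psi_{\eta,s}-\psi_{\eta,1-s}=z\,\delta_{\eta,s}$ and $S:=\psi_{\eta,s}+\psi_{\eta,1-s}$. Subtracting and adding the two equations gives
\[
\dot D=z\,t^{-1}S+(1-2s)\,t^{-\frac32}\eta=z\,t^{-1}S-2z\,t^{-\frac32}\eta,
\qquad
\dot S=z\,t^{-1}D+t^{-\frac32}\eta .
\]
The first relation uses $1-2s=-2z$. The second uses $(1-s)+s=1$. The initial values are $D(1)=0$ and $S(1)=2$.

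Dividing the first relation by $z$ yields $\dot\delta_{\eta,s}=t^{-1}S-2t^{-\frac32}\eta$, with $\delta_{\eta,s}(1)=0$. Integrating the second relation from $1$ to $t$, which is legitimate since $S$ is absolutely continuous, gives
\[
S(t)=2+\int_1^t v^{-\frac32}\eta(v)\,dv+z\int_1^t v^{-1}D(v)\,dv=\gamma_\eta(t)+z^2\int_1^t v^{-1}\delta_{\eta,s}(v)\,dv .
\]
Substituting this expression for $S$ into the equation for $\dot\delta_{\eta,s}$ produces exactly the stated integro-differential equation.

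There is no serious obstacle. The only points needing care are:
\begin{itemize}
\item checking the coefficient identities $(1-s)-\frac12=-z$, $1-2s=-2z$ and $(1-s)+s=1$;
\item noting that the equation holds almost everywhere, because $\eta$ is merely $L^\infty$ and the solutions are only absolutely continuous.
\end{itemize}
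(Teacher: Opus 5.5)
Your proposal is correct and follows the paper's proof essentially step by step. Like the paper, you add and subtract the equations for $\psi_{\eta,s}$ and $\psi_{\eta,1-s}$, integrate the sum using $\psi_{\eta,s}(1)=\psi_{\eta,1-s}(1)=1$, substitute into the difference equation, and then divide by $z=s-\frac12$; your remark that the identity holds only almost everywhere, since $\eta\in L^\infty$, is a small precision the paper leaves implicit.
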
  
\begin{proof}
Let   $s\in \mathbb{C}\setminus\{\frac{1}{2}\}$.  According to Notation~\ref{NotationPsi}, we obtain the following differential equation, we have 
\begin{equation*}
   \dot{\psi}_{\eta, s}(t)+ \dot{\psi}_{\eta, 1- {s}} (t)    =\big( s-\frac{1}{2}\big) t^{-1 }\big(   {\psi}_{\eta, s}(t)-{\psi}_{\eta, 1- {s}} (t)  \big)  +   t^{ -\frac{3}{2}  } \,  \eta(t).
\end{equation*}
Use the fact that $ {\psi}_{\eta, s}(1)={\psi}_{\eta, 1- {s}} (1)=1$ and integrate, we obtain
\begin{align*}
     {\psi}_{\eta, s}(t)+{\psi}_{\eta, 1- {s}} (t)     &=    \frac{1}{2}\big(2s-1\big)  \int_1^t u^{-1 }\big(    {\psi}_{\eta, s}(u)-{\psi}_{\eta, 1- {s}} (u) \big) \, du\\ 
& +2  +   \int_1^t u^{-\frac{3}{2}  } \,  \eta(u)  \, du.
\end{align*}
According to Notation~\ref{NotationPsi},, we have 
\begin{align}
\notag  \dot{\psi}_{\eta, s}(t)- \dot{\psi}_{\eta, 1- {s}} (t)  &= \big( s-\frac{1}{2}\big) t^{-1}\big(   {\psi}_{\eta, s}(t)+{\psi}_{\eta, 1- {s}} (t)  \big)  \\
\notag   &  -(2s-1)  t^{ -\frac{3}{2}} \,  \eta(t).
\end{align}
The two previous differential equations, gives 
\begin{align}
\notag\dot{\psi}_{\eta, s}(t)- \dot{\psi}_{\eta, 1- {s}} (t)  =  &\big( s-\frac{1}{2}\big) ^2 t^{-1} \int_1^t u^{-1}\big(    {\psi}_{\eta, s}(u)-{\psi}_{\eta, 1- {s}} (u) \big) \, du\\  
\notag&  +  \big( s-\frac{1}{2}\big) t^{-1} \Big[2+    \int_1^t u^{-\frac{3}{2} } \,  \eta(u)  \, du\Big]\\
\label{edodisperssionx}    &  -  2\big( s-\frac{1}{2}\big)   t^{ -\frac{3}{2} } \,  \eta(t).
\end{align} 
By the Notation~\eqref{Notationdelta}, we have
\[
   \delta_{\eta,s}(t) =\frac{1}{s-\frac{1}{2}}  \big(  {\psi_{\eta,s}}( t)-    {\psi}_{\eta,1-{s}} ( t) \big).
\] 
 Since $  {\psi}_{\eta, s}(1)={\psi}_{\eta, 1- {s}} (1)=1$, then   $\delta_{\eta,s}(1)=0$ In terms of $\delta_{\eta,s}$ the integral equation \eqref{edodisperssionx} becomes
\begin{align*}
\notag   \dot{\delta}_{\eta,s}(t)=  &\big( s-\frac{1}{2}\big) ^2 t^{-1} \int_1^t v^{-1}\delta_{\eta,s}(v)\, dv   +       t^{-1}\Big[2-     2t^{ -\frac{1}{2} } \,  \eta(t)+   \int_1^u v^{-\frac{3}{2} } \,  \eta(v)  \, dv\Big].
\end{align*} 
\end{proof} 
\section{Main Lemmas}\label{MainLemmas}

 In the following lemma,  we consider the continuous solution $\psi_{\eta,w}$ of the differential equation \eqref{EDO}, as introduced in Notation \ref{NotationPsi}, and study its asymptotic behavior under the hypotheses  hypothesis  \eqref{NombreRotation}. We do not use the hypothesis  \eqref{NombreRotationB}.
\begin{lemma}\label{lemmeIntegrationExpliB}
Let $\eta\in L^{\infty}([1,+\infty), \mathbb{R}_+)$ satisfy the hypothesis \eqref{NombreRotation}, with rotation number $\rho_\eta$.  Then,  for all $w\in \mathbb{C}_+$  the function $\epsilon_w  : [1,+\infty) \to \mathbb{C}$ defined as
\begin{align*}
\epsilon_w(t):=  \psi_{\eta,w}(t) +\mu_\eta(w) t^{w-\frac{1}{2}}+ \rho_\eta \frac{1-w}{w} t^{-\frac{1}{2}},\quad \forall t \ge 1, 
\end{align*}
satisfies the following equation 
\begin{gather*}
\sup_{t\geq1}|t^{\frac{3}{2}}\, \epsilon_w(t)|   <+\infty.
\end{gather*}
\end{lemma}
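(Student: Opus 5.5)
The plan is to compute $\epsilon_w$ in closed form from the explicit formula \eqref{Solutionz} and the definition of $\mu_\eta$ in Notation~\ref{Notationmu}, and then to bound the result by one integration by parts that uses hypothesis \eqref{NombreRotation}.

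\emph{Step 1: reduce $\psi_{\eta,w}+\mu_\eta(w)t^{w-\frac12}$ to a tail integral.} Fix $w\in\mathbb{C}_+$. By \eqref{Solutionz} and Notation~\ref{Notationmu},
\[
\psi_{\eta,w}(t)+\mu_\eta(w)\,t^{w-\frac12}
= t^{w-\frac12}\Big[(1-w)\int_1^t u^{-1-w}\eta(u)\,du-(1-w)\int_1^{+\infty}u^{-1-w}\eta(u)\,du\Big],
\]
which equals $-(1-w)\,t^{w-\frac12}\int_t^{+\infty}u^{-1-w}\eta(u)\,du$. Both integrals converge absolutely because $\eta$ is bounded and $\Re(w)>0$.

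\emph{Step 2: split off the rotation number.} Write $\eta=\rho_\eta-(\rho_\eta-\eta)$. Since $\int_t^{+\infty}u^{-1-w}\,du=t^{-w}/w$, the constant part contributes exactly $-\rho_\eta\frac{1-w}{w}t^{-\frac12}$. This cancels the last term in the definition of $\epsilon_w$, leaving
\[
\epsilon_w(t)=(1-w)\,t^{w-\frac12}\int_t^{+\infty}u^{-1-w}\big(\rho_\eta-\eta(u)\big)\,du .
\]

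\emph{Step 3: integrate by parts using \eqref{NombreRotation}.} This is the only step with real content. Let $P(u):=\int_1^u(\rho_\eta-\eta(v))\,dv$. It is absolutely continuous, and by \eqref{NombreRotation} it satisfies $|P|\le C$ on $[1,+\infty)$. Integrating by parts on $[t,T]$ and letting $T\to\infty$ gives
\[
\int_t^{+\infty}u^{-1-w}\,dP(u)=-t^{-1-w}P(t)+(1+w)\int_t^{+\infty}u^{-2-w}P(u)\,du .
\]
The boundary term at $T$ vanishes because $|T^{-1-w}P(T)|\le C\,T^{-1-\Re(w)}\to0$.

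The two remaining terms are bounded by $C\,t^{-1-\Re(w)}$ and by $|1+w|\,C\,t^{-1-\Re(w)}/(1+\Re(w))$, respectively. Multiplying by $|(1-w)\,t^{w-\frac12}|=|1-w|\,t^{\Re(w)-\frac12}$ then yields
\[
|\epsilon_w(t)|\le |1-w|\,C\Big(1+\frac{|1+w|}{1+\Re(w)}\Big)\,t^{-\frac32}
\]
for all $t\ge1$. This gives $\sup_{t\ge1}|t^{\frac32}\epsilon_w(t)|<+\infty$.

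The only points needing care are justifying the integration by parts for the merely absolutely continuous $P$ and the vanishing of the boundary term at infinity. Both follow from the boundedness of $P$ and from $\Re(w)>0$.
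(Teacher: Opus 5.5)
Your proof is correct and follows essentially the same route as the paper: rewrite $\psi_{\eta,w}+\mu_\eta(w)t^{w-\frac12}$ as a tail integral, split off $\rho_\eta$ to cancel the $t^{-\frac12}$ term, then integrate by parts using (H1). The only difference is that the paper takes the primitive $\int_t^u(\eta(v)-\rho_\eta)\,dv$, anchored at $t$ and bounded by $2c$, so no boundary term appears and the constant is $2c\,|1-w^2|/(1+\Re(w))$; you anchor at $1$ and carry the extra boundary term, which changes only the constant.
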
 
\begin{proof}
By definition of $\mu_\eta(w)$ in Notation \ref{Notationmu}, we have
\[
\mu_\eta(w) +1+   (1-w) \int_1^t u^{-1-w} \eta(u) \, du = -   (1-w)\int_t^{+\infty} u^{-1-w} \eta(u) \, du,\ \forall t \ge 1.
\]  
From   equation \eqref{Solutionz}, we  obtain
\begin{equation}\label{psiparticuliere}
t^{\frac{1}{2}}\psi_{\eta,w}( t) =-\mu_\eta(w)\, t^{w} - (1-w)  t^{w} \int_t^{+\infty} u^{-1-w} \eta(u) \, du, \quad \forall t \ge 1.
\end{equation}
By hypothesis  the function $\eta$  satisfies the  hypothesis \eqref{NombreRotation} with  rotation number   $\rho_\eta$. Then there exists $c>0$ such that
\begin{equation*}
     \Big| \int_1^t\, \big( \eta(u)-\rho_\eta\big)du\Big|<c,\quad \forall t\geq1.
\end{equation*} 
Implies
\begin{equation}\label{bornetas}
     \Big| \int_t^u\, \big( \eta(u)-\rho_\eta\big)du\Big|<2c,\quad \forall u\geq t\geq1.
\end{equation} 
Hence, the equation \eqref{psiparticuliere} can be written as
\begin{align*}
\psi_{\eta,w}( t) =& -\mu_\eta(w)\, t^{w-\frac{1}{2}} -\rho_\eta\frac{1-w}{w}  \\
& - (1-w) t^{w-\frac{1}{2}}\int_t^{+\infty}  u^{-1-w} ( \eta(u)-\rho_\eta)  du,
 \end{align*}
In other words
\[
  \psi_{\eta,w}( t)+\mu_\eta(w)\, t^{w-\frac{1}{2}} +\rho_\eta\frac{1-w}{w} t^{ -\frac{1}{2}} = \epsilon_w(t),\quad\forall t\geq1,
 \]
where
\[
 \epsilon_w(t)=-(1-w)t^{w-\frac{1}{2}} \int_t^{+\infty}  u^{-1-w} ( \eta(u)-\rho_\eta)  du,
 \]
Using equation equation \eqref{bornetas} and  the integration by parts formula,  we obtain 
\[
\epsilon_w(t):=-(1-w^2)  t^{w-\frac{1}{2}} \int_t^{+\infty}  u^{-2-w}\int_t^{u} (\eta(v)-\rho_\eta)dv du,\quad \forall t\geq1,
\]
and 
\[
\sup_{t\geq1} |t^{ \frac{3}{2}}  \epsilon_w(t)|  \le 2\, c\, \frac{|1-w^2|}{1+\Re(w)}<+\infty.
\]
\end{proof}

\section{Proof of the Theorem \ref{MainTheo}} \label{ProofTheorem}

\begin{proof}[Proof of the Theorem \ref{MainTheo}]
Let $\eta\in L^{\infty}([1,+\infty), \mathbb{R}_+)$. We recall that the function $\delta_{\eta,w}$ is defined in Notation \ref{Notationdelta} as 
\[
\delta_{\eta,w}(t) := \frac{1}{w-\frac{1}{2}} \Big( \psi_{\eta,w}( t) - \psi_{\eta,1-{w}}( t) \Big),\quad\forall w\in \mathbb{C}\setminus\{\frac{1}{2} \},  \quad  \forall t \ge1.
\]
We prove the theorem by contradiction. Suppose that  $\Big(\mu_\eta(s),\,  \mu_\eta(1-{s})\Big)= (0,\, 0)$. 
By  Lemma \ref{lemmeIntegrationExpliB}   for $w\in \{s, \ 1-{s} \}$ we have
\begin{equation}\label{AssymptoticProof}
\sup_{t\geq1}\Big|  t^{ \frac{3}{2}} \psi_{\eta,w}(t) + \rho_\eta \frac{1-w}{w} t \Big|< +\infty.
\end{equation} 
By definition of $  \delta_{\eta,s}$ we get 
\begin{equation}\notag
\sup_{t\geq1}\Big|  t^{ \frac{3}{2}} \delta _{\eta,s}(t) + \frac{\rho_\eta }{s-\frac{1}{2}}\Big(\frac{1-s}{s}-\frac{s}{1-s}\Big) t \Big|< +\infty,
\end{equation}  
equivalently
\begin{equation}\label{Oscillation}
\sup_{t\geq1}\Big|t^{\frac{3}{2}}\Big(\delta_{\eta,s}(t)-\frac{2\, \rho_\eta}{s(1-s)}\,  t^{-\frac{1}{2}}\Big)\Big|< +\infty.
\end{equation}  
According to Notation~\ref{NotationPsi},, we have 
\begin{align}
\notag  \dot{\psi}_{\eta, s}(t)- \dot{\psi}_{\eta, 1- {s}} (t)  &= \big( s-\frac{1}{2}\big) t^{-1}\big(   {\psi}_{\eta, s}(t)+{\psi}_{\eta, 1- {s}} (t)  \big)  \\
\notag   &  -(2s-1)  t^{ -\frac{3}{2}} \,  \eta(t).
\end{align}
Then
\begin{equation}\label{OscillationDerivee}
\dot{\delta}_{\eta,s}(t)  = t^{-1}\big(   {\psi}_{\eta, s}(t)+{\psi}_{\eta, 1- {s}} (t)  \big)   - 2t^{ -\frac{3}{2}} \,  \eta(t).
\end{equation}
By Proposition \ref{MainProp}, we have the following  integro-differential equation:
\begin{equation}\label{IntegralEquationProoftheorem}
    \dot{\delta}_{\eta,s}(t)=   \big( s-\frac{1}{2}\big) ^2 t^{-1} \int_1^t v^{-1}\delta_{\eta,s}(v)\, dv   +       t^{-1}\Big[  \gamma_\eta(t)-   2t^{ -\frac{1}{2} } \,  \eta(t)\Big], \ \  {\delta}_{\eta,s}(1)=0,
\end{equation}
where  
\[
\gamma_\eta(t) :=2+  \int_1^t  v^{-\frac{3}{2} } \,  \eta(v)  \, dv,\quad \forall t\geq1.
\]
By equation \eqref{Oscillation}, the integral 
\[
\int_1^t v^{-1}\delta_{\eta,s}(v)\, dv,
\]
converge. Since $\|\eta\|_\infty<+\infty$, then by  equations \eqref{AssymptoticProof} and   \eqref{OscillationDerivee} we get $\lim_{t\to+\infty} \dot{\delta}_{\eta,s}(t)=0$. Equation \eqref{IntegralEquationProoftheorem} gives
\begin{equation}\label{limgammaetoil}
  \int_1^{+\infty} v^{-1}\delta_{\eta,s}(v)\, dv =-\frac{\gamma_{\eta,*}}{ \big( s-\frac{1}{2}\big) ^2},\quad \gamma_{\eta,*}:=2+  \int_1^{+\infty}  v^{-\frac{3}{2} } \,  \eta(v)  \, dv.
\end{equation}
By consequence, equation is equivalent to
\begin{equation}\notag
    \dot{\delta}_{\eta,s}(t)=  - z^2 t^{-1}\int_t^\infty  v^{-1}\delta_{\eta,s}(v)\, dv   -      t^{-1}\,\big(\gamma_{\eta,*}-{\gamma}_\eta(t)\big),\quad {\delta}_{\eta,s}(1)=0,
\end{equation}
where
\begin{equation}\label{Defztildegamma}
z:=  s-\frac{1}{2} \ \text{and}\ \gamma_{\eta,*}-{\gamma}_\eta(t):=2t^{ -\frac{1}{2} } \,  \eta(t)+\int_t^{+\infty}  v^{-\frac{3}{2} } \,  \eta(v)  \, dv,\quad \forall t\geq1.
\end{equation}
 In order to simplify the notation, denote the linear operator 
\[
\Psi[g](t):=\int_t^\infty v^{-1} g(v)\, dv,
\]
In term of $\Psi$ the previous  integro-differential equation gives
\begin{equation}\notag
    \dot{\delta}_{\eta,s}(t)=  - z^2 t^{-1}\Psi\big[{\delta}_{\eta,s}\big](t)  -      t^{-1}\,  -      t^{-1}\,\big(\gamma_{\eta,*}-{\gamma}_\eta(t)\big),\quad {\delta}_{\eta,s}(1)=0,
\end{equation}
Use the change of variable,
\begin{equation}\label{changeofvariable}
{\delta}_{\eta,s}^\sharp(t):=\delta_{\eta,s}(t)-\,  \frac{2\, \rho_\eta}{s(1-s)}\, t^{-\frac{1}{2}},\quad t\geq1,
\end{equation}
We recall that $z=s-\frac{1}{2}$. We  have 
\[
 \Psi[u^{-\frac{1}{2}}](t) =2\quad \text{and}\quad  2\, z^2\, \frac{1}{s(1-s)}-\frac{1}{2}\frac{1}{s(1-s)}=-2,
\]
In term of ${{\delta}}_{\eta,s}^\sharp$  the previous  integro-differential equation gives
\begin{equation}\label{IntegralEquationProoftheoremBCC}
 \dot{\delta}_{\eta,s}^\sharp(t) = -z ^2\,\,  t^{-1}\, \Psi[{\delta}_{\eta,s}^\sharp](t)   -       t^{-1} \,   \gamma_\eta^\sharp(t),\quad t\geq1, 
\end{equation}
with initial condition
\begin{equation}\label{CondiInitiaChapeaudelta}
 {\delta}_{\eta,s}^\sharp(1)=-\frac{2\, \rho_\eta}{s(1-s)}.
\end{equation}
and where
\begin{equation}\label{deftildeeta}
\gamma_\eta^\sharp(t):=   4\rho_\eta t^{-\frac{1}{2}}-   \big(\gamma_{\eta,*}-{\gamma}_\eta(t)\big),\quad t\geq1.
\end{equation}
By equation \eqref{Oscillation},  we have
\begin{equation}\label{OscillationB}
\sup_{t\geq1}\big|t^{\frac{3}{2}}{\delta}_{\eta,s}^\sharp(t)\big|=\sup_{t\geq1}\Big|t^{\frac{3}{2}}\Big(\delta_{\eta,s}(t)-\frac{2\, \rho_\eta}{s(1-s)}\,  t^{-\frac{1}{2}}\Big)\Big|<+\infty,
\end{equation}
Then
\[
\lim_{t\to+\infty}{\delta}_{\eta,s}^\sharp(t) =0,
\]
Integrate equation \eqref{IntegralEquationProoftheoremBCC} over $[t,+\infty)$ and use the definition of $\Psi$, we get
\begin{equation}\label{BonneEdoA}
 {\delta}_{\eta,s}^\sharp =  z ^2\,  \Psi^2[{\delta}_{\eta,s}^\sharp]   +   \Psi[ \gamma_\eta^\sharp],
\end{equation}
 Apply $\Psi^2$ , we get
\begin{equation}\label{BonneEdo}
\Psi^2[{\delta}_{\eta,s}^\sharp] =  z ^2\,\Psi^4[{\delta}_{\eta,s}^\sharp] +    \Psi^3[ \gamma_\eta^\sharp],
\end{equation}
We recall that, by definition of $ \gamma_\eta^\sharp$ in \eqref{deftildeeta} and of $\gamma_\eta$ in \eqref{Defztildegamma},we have
\[
 \gamma_\eta^\sharp(t) = 4\rho_\eta t^{-\frac{1}{2}}-   2t^{ -\frac{1}{2} } \,  \eta(t)+\int_t^{+\infty}  v^{-\frac{3}{2} } \,  \eta(v)  \, dv,
\]
equivalently,
\begin{equation}\label{gammaetachapeau}
 \gamma_\eta^\sharp(t)= 2 \,  t^{-\frac{1}{2}}\, \Big(\rho_\eta-\eta(t)\Big)+ \int_t^{+\infty}  v^{-\frac{3}{2} } \,   \Big(\rho_\eta-\eta(v)\Big) \, dv,\quad t\geq1.
\end{equation}
Using the hypothesis \eqref{NombreRotation} and the integration by part formula, we obtain
\[
\sup_{t\geq1}\big|t^{\frac{3}{2}}\Psi^k\big[ \gamma_\eta^\sharp\big](t)\big|<+\infty,\quad \forall k\in \mathbb{N}^*.
\]
Since $\Re(z)\in (0,1)\setminus\{\frac{1}{2}\}$ and $z=s-\frac{1}{2}$, then $\Re(s)\in (0,\frac{1}{2})$. The previous approximation and the approximation given by \eqref{OscillationB}, gives 
\[
\lim_{t\to+\infty}   \sinh\big(z \, \ln(t)\big)\,  \Psi^k[g] (t) =0,\quad k\in \mathbb{N}^*,\quad g\in\big\{{\delta}_{\eta,s}^\sharp,\,  \gamma_\eta^\sharp\big\},
\]
\[
\lim_{t\to+\infty}   \cosh\big(z \, \ln(t)\big)\,  \Psi^k[g] (t) =0,\quad k\in \mathbb{N}^*,\quad g\in\big\{{\delta}_{\eta,s}^\sharp,\,  \gamma_\eta^\sharp\big\},
\]
Multiply equation \eqref{BonneEdo} by $ t^{-1}\,  \sinh\big(z \, \ln(t)\big)$ and integrate, we get
\begin{align*}\notag
 \int_1^\infty \,  t^{-1}\,  \sinh\big(z \, \ln(t)\big)\,  \Psi^2[{\delta}_{\eta,s}^\sharp] (t)\, dt = & z ^2\, \int_1^\infty \, t^{-1}\,  \sinh\big(z\, \ln(t)\big)\, \Psi^4[{\delta}_{\eta,s}^\sharp](t) \, dt\\
&  +\int_1^\infty\, t^{-1}\,  \sinh\big(z \, \ln(t)\big)\,  \Psi^3[ \gamma_\eta^\sharp](t)\, dt, 
\end{align*} 
Use the fact that
\[
\frac{d}{dt}\sinh\big(z \, \ln(t)\big)=z\, t^{-1}\,  \cosh\big(z \, \ln(t)\big),
\] 
 \[
\frac{d}{dt}\cosh\big(z \, \ln(t)\big)=z\, t^{-1}\,  \sinh\big(z \, \ln(t)\big),
\] 
and use the fact that
 \[
\frac{d}{dt} \Psi^k[g](t) =- t^{-1}\Psi^{k-1}[g](t),\quad k\in \mathbb{N}^*,\quad   g\in\big\{{\delta}_{\eta,s}^\sharp,\,  \gamma_\eta^\sharp\big\},
\] 
The integration by part formula, gives
\begin{align*}\notag
 z ^2\, \int_1^\infty \, t^{-1}\,  \sinh\big(z\, \ln(t)\big)\, \Psi^4[{\delta}_{\eta,s}^\sharp](t) \, dt&= -z\Psi^4[{\delta}_{\eta,s}^\sharp] (1)\\
&+z\int_1^\infty \, t^{-1}\,  \cosh\big(z\, \ln(t)\big)\, \Psi^3[{\delta}_{\eta,s}^\sharp](t) \, dt\\
&= -z\Psi^4[{\delta}_{\eta,s}^\sharp] (1)\\
&+ \int_1^\infty \, t^{-1}\,  \sinh\big(z\, \ln(t)\big)\, \Psi^2[{\delta}_{\eta,s}^\sharp](t) \, dt.
\end{align*}
By consequence, 
\begin{align*}\notag
&    -z\Psi^4[{\delta}_{\eta,s}^\sharp] (1) +  \int_1^\infty\, t^{-1}\,  \sinh\big(z \, \ln(t)\big)\,  \Psi^3[ \gamma_\eta^\sharp](t)\, dt  =0
\end{align*} 
Use integration by part formula,
\begin{equation}\label{Identity}
  z^2\Psi^4[{\delta}_{\eta,s}^\sharp] (1) =- \Psi^3[ \gamma_\eta^\sharp](1)  +\int_1^\infty\, t^{-1}\,  \cosh\big(z \, \ln(t)\big)\,  \Psi^2[ \gamma_\eta^\sharp](t)\, dt.
\end{equation} 
Finding the expression of $\Psi^4[{\delta}_{\eta,s}^\sharp] (1)$: By \eqref{BonneEdoA}, we have
\begin{equation}\notag
 {\delta}_{\eta,s}^\sharp (1)=  z ^2\,  \Psi^2[{\delta}_{\eta,s}^\sharp] (1)  +   \Psi[ \gamma_\eta^\sharp](1),
\end{equation}
and applying $\Psi^2$ to the same equation, we get
\begin{equation}\notag
\Psi^2[{\delta}_{\eta,s}^\sharp] (1) =  z ^2\,  \Psi^4[{\delta}_{\eta,s}^\sharp] (1)  +   \Psi^3[ \gamma_\eta^\sharp](1),
\end{equation} 
Hence
\begin{equation}\notag
z ^2\,  \Psi^4[{\delta}_{\eta,s}^\sharp] (1)  =-z^{-2} {\delta}_{\eta,s}^\sharp (1)-z^{-2}  \Psi[ \gamma_\eta^\sharp](1)-    \Psi^3[ \gamma_\eta^\sharp](1).
\end{equation}
Thanks to the initial condition given by \eqref{CondiInitiaChapeaudelta}, we obtain 
\begin{equation}\notag
  z ^2\,  \Psi^4[{\delta}_{\eta,s}^\sharp] (1) =z^{-2}\frac{2\, \rho_\eta}{s(1-s)}-z^{-2}\Psi[ \gamma_\eta^\sharp](1)-  \Psi^3[ \gamma_\eta^\sharp](1),
\end{equation} 
By consequence, equation  \eqref{Identity}, is equivalent to
\begin{align*}\notag 
&    z^{-2}\frac{2\, \rho_\eta}{s(1-s)}-z^{-2}  \Psi[ \gamma_\eta^\sharp](1) = \int_1^\infty\, t^{-1}\,  \cosh\big(z \, \ln(t)\big)\,  \Psi^2[ \gamma_\eta^\sharp](t)\, dt
\end{align*} 
Use integration by part formula,
\begin{align*}\notag 
&   z^{-1}\frac{2\, \rho_\eta}{s(1-s)} = z^{-1} \Psi[ \gamma_\eta^\sharp](1)+ \int_1^\infty\, t^{-1}\,  \sinh\big(z \, \ln(t)\big)\,  \Psi[ \gamma_\eta^\sharp](t)\, dt.
\end{align*} 
Since $\eta$ is a real valued function, then $\Psi^2[ \gamma_\eta^\sharp](1)$, $h_{\eta,*}$ and $\rho_\eta$ are a real numbers.   The  real   part gives
\begin{align*}\notag 
\small 2\, \rho_\eta \Re\left(\frac{1}{ z\, s\, (1-s)}\right)= \Re(z^{-1})\Psi[ \gamma_\eta^\sharp](1)+ \int_1^\infty\, t^{-1}\, \Re\Big( \sinh\big(z \, \ln(t)\big)\Big)\,  \Psi[ \gamma_\eta^\sharp](t)\, dt.
\end{align*} 
We recall that $z=s-\frac{1}{2}$. Denote $s:=\sigma+i\tau$, then
\[
 \Re\left(\frac{1}{z\,s(1-s)}\right) = \frac{\left(\frac12-\sigma\right) \left(\tau^2-\frac14+\left(\sigma-\frac12\right)^2\right)} {|z \, s\, (1-s)|^2}.
\]
Since $s\in B$, then  $\sigma \in (0,\frac{1}{2})$ and $\tau \geq\frac{1}{2}$.  Then
\[
\left(\frac12-\sigma\right) >0\quad\text{and}\quad  \tau^2-\frac14+\left(\sigma-\frac12\right)^2>0
\]
Implies
\[
 \Re\left(\frac{1}{z\,s(1-s)}\right) >0.
\]
By hypothesis    \eqref{NombreRotation} we have $\rho_\eta>0$. Since  $\Re(z^{-1})<0$ (because $\Re(z)\in(0,\frac{1}{2})$, then
\begin{equation}\label{enfin}
    \Psi[ \gamma_\eta^\sharp](1)+\frac{ 1}{\Re(z^{-1} )}\Re\big(I(z)\big)<0,
\end{equation} 
where 
\[
I(z):=\int_1^\infty\, t^{-1}\, \Re\Big( \sinh\big(z \, \ln(t)\big)\Big)\,  \Psi[\gamma_\eta^\sharp](t)\, dt.
\]

Since $\tau>\frac{1}{2}$, we can write
\begin{align*}
I(z)&=\sum_{n=0}^{+\infty}\int_{  \frac{n\, \pi}{4\, \tau}}^{ \frac{n+1}{4\tau}\pi}\,  \Re\Big( \sinh\big(z \, t\big)\Big)\,  \Psi[\gamma_\eta^\sharp](e^t)\, dt\\
&=\int_{0}^{ \frac{ \pi}{4\tau}}\,  \Re\Big( \sinh\big(z \, t\big)\Big)\, \sum_{n=0}^{+\infty} (-1)^n\Psi[\gamma_\eta^\sharp](e^{t+\frac{n\, \pi}{4\, \tau}})\, dt.
\end{align*} 
In order simplify the notation, denote
\[
f_\tau[\gamma_\eta^\sharp](t):=  \sum_{n=0}^{+\infty} (-1)^n\Psi[\gamma_\eta^\sharp](e^{t+\frac{n\, \pi}{4\, \tau}})
\]
Use Cauchy-Schwarz inegality,
\[
    \big| \Re\big(I(z)\big)\big| \le  \Bigg( \int_{0}^{ \frac{ \pi}{4\tau}}\, \Big( \Re\Big( \sinh\big(z \, t\big)\Big)\Big)^2dt\Bigg)^{\frac{1}{2}}\Bigg( \int_{0}^{ \frac{ \pi}{4\tau}}\,\Big(f_\tau[\gamma_\eta^\sharp](t)\Big)^2\, dt\Bigg)^{\frac{1}{2}},
\]
since
\[
\left(
\int_0^{\frac{\pi}{4\tau}}
\Big(\Re\big(\sinh(zt)\big)\Big)^2\,dt
\right)^{\frac12}
\leq
\sqrt{\frac{\pi}{4\tau}}\,
\sinh\left(
\frac{\pi\left(\frac12-\sigma\right)}{4\tau}
\right), 
\]
then
\[
0\le -\Re(z^{-1})
\sqrt{\frac{\pi}{4\tau}}\,
\sinh\left(
\frac{\pi\left(\frac12-\sigma\right)}{4\tau}
\right)
<
\sqrt{\frac{\pi}{2}}\,
\sinh\left(\frac{\pi}{4}\right),
\]
We obtain
\[
    \big| \frac{ 1}{\Re(z^{-1} )}\Re\big(I(z)\big)\big| \le  \sqrt{\frac{\pi}{2}}\,
\sinh\left(\frac{\pi}{4}\right)\Bigg( \int_{0}^{ \frac{ \pi}{4\tau}}\,\Big(f_\tau[\gamma_\eta^\sharp](t)\Big)^2\, dt\Bigg)^{\frac{1}{2}},
\]
By definition of in equation \eqref{gammaetachapeau}, we have
\begin{align*} 
   t^{ \frac{1}{2}}\gamma_\eta^\sharp(t)&= 2 \ \Big(\rho_\eta-\eta(t)\Big)+t^{ \frac{1}{2}} \int_t^{+\infty}  v^{-\frac{3}{2} } \,   \Big(\rho_\eta-\eta(v)\Big) \, dv\\
&=2\frac{d}{dt}\eta^\sharp(t)
\end{align*}
where
\[
\eta^\sharp(u):=u^{ \frac{1}{2}} \int_u^{+\infty}  v^{-\frac{3}{2} } \,   \Big(\rho_\eta-\eta(v)\Big) \, dv,\quad u\geq1.
\]
Using the  definition of $\Psi$ and the integration by part formula,  
\begin{align*} 
 \Psi[ \gamma_\eta^\sharp](t)&= \int_t^{+\infty} u^{-1}\, \gamma_\eta^\sharp(u)\, du  =2 \int_t^{+\infty} u^{-\frac{3}{2}}\, \frac{d}{du}\eta^\sharp(u)\, du\\ 
&=3\int_t^{+\infty} u^{-\frac{5}{2}}\, \Big(\eta^\sharp(u)-\eta^\sharp(t)\Big)\, du=: \Phi[ \gamma_\eta^\sharp](t)
\end{align*}
where
\[
\eta^\sharp(u):=u^{ \frac{1}{2}} \int_u^{+\infty}  v^{-\frac{3}{2} } \,   \Big(\rho_\eta-\eta(v)\Big) \, dv,
\]
By the hypothesis  \eqref{NombreRotationB}, we have
\begin{align*}\notag 
&  \Psi[\gamma_\eta^\sharp](1) \geq\sqrt{\frac{\pi}{2}}\,
\sinh\left(\frac{\pi}{4}\right)\Bigg( \int_{0}^{ \frac{ \pi}{4\tau}}\,\Big(f_\tau[\gamma_\eta^\sharp](t)\Big)^2\, dt\Bigg)^{\frac{1}{2}},
\end{align*} 
Then
\begin{equation}\notag
    \Psi[ \gamma_\eta^\sharp](1)+\frac{ 1}{\Re(z^{-1} )}\Re\big(I(z)\big)\geq0,
\end{equation} 
We obtain a contradiction with equation \eqref{enfin}. 

\end{proof}  

\makeatother


\end{document}